\newcommand{\loc}{\ensuremath{\mathrm{loc}}}
\numberwithin{equation}{section}
\theoremstyle{plain}
\newtheorem{theorem}{Theorem}[section]
\newtheorem{lemma}[theorem]{Lemma}
\theoremstyle{definition}
\newtheorem{definition}[theorem]{Definition}
\newtheorem{remark}[theorem]{Remark}
\newtheorem{example}[theorem]{Example}
\newtheorem*{acknowledgement}{Acknowledgements}
\newcommand{\dN}{\mathbb{N}}
\newcommand{\dR}{\mathbb{R}}
\newcommand{\dZ}{\mathbb{Z}}
\newcommand{\cF}{\mathcal{F}}
\newcommand{\cL}{\mathcal{L}}
\newcommand{\rmd}{\mathrm{d}}
\newcommand{\olB}{\overline{B}}
\DeclareMathOperator{\dist}{dist}
\DeclareMathOperator{\supp}{supp}
\newcommand{\rmloc}{\mathrm{loc}}
\newcommand{\dint}{\,\rmd}
\newcommand{\ssm}{\backslash}
\newcommand{\weakto}{\rightharpoonup}
\newcommand{\id}{\mathrm{id}}
\newcommand{\lr}[3]{#1#3#2}
\newcommand{\xlr}[3]{\left#1#3\right#2}
\newcommand{\biglr}[3]{\bigl#1#3\bigr#2}
\newcommand{\bigglr}[3]{\biggl#1#3\biggr#2}
\newcommand{\abs}[1]{\lr\lvert\rvert{#1}}
\newcommand{\xabs}[1]{\xlr\lvert\rvert{#1}}
\newcommand{\bigabs}[1]{\biglr\lvert\rvert{#1}}
\newcommand{\biggabs}[1]{\bigglr\lvert\rvert{#1}}
\newcommand{\norm}[1]{\lr\lVert\rVert{#1}}
\newcommand{\biggnorm}[1]{\bigglr\lVert\rVert{#1}}
\newcommand{\scp}[1]{\lr\langle\rangle{#1}}
\newcommand{\bigscp}[1]{\biglr\langle\rangle{#1}}
\newcommand{\biggscp}[1]{\bigglr\langle\rangle{#1}}
\newcommand{\fracwithdelims}[4]{\genfrac{#1}{#2}{}{}{#3}{#4}}
\newcommand{\coloneqq}{:=}
\DeclareMathOperator*{\wlim}{w-lim}
\begin{document}

\title{Uniform Continuity and Br\'ezis-Lieb Type Splitting for
  Superposition Operators in Sobolev Space} 

\author{Nils Ackermann\thanks{This research was partially supported by CONACYT grant 237661 and
UNAM-DGAPA-PAPIIT grant IN104315 (Mexico)}\\Instituto de Matem\'aticas\\
Universidad Nacional Aut\'onoma de M\'exico\\
Circuito Exterior, C.U., 04510 M\'exico D.F., M\'exico}
\date{}
\maketitle
\begin{abstract}
  Using concentration-compactness arguments we prove a variant of
  the Br\'ezis-Lieb-Lemma under weaker assumptions on the
  nonlinearity than known before.  An intermediate result on the
  uniform continuity of superposition operators in Sobolev space
  is of independent interest.
\end{abstract}

\section{Introduction}
\label{sec:introduction}

In their seminal paper \cite{MR699419} Br\'ezis and Lieb prove a
result about the decoupling of certain integral expressions,
which has been used extensively in the calculus of variations.
Using concentration compactness arguments in the spirit of
Lions\cite{MR88e:35170,MR778970,MR778974} we prove a variant of
this lemma under weaker assumptions on the nonlinearity than
known before.  To describe a special case of the Br\'ezis-Lieb
lemma, suppose that $\Omega$ is an unbounded domain in $\dR^N$,
$p >1$, $f(t) \coloneqq \abs{t}^p $ for $t\in\dR$, and $(u_n)$ a
bounded sequence in $L^p (\Omega)$ that converges pointwise
almost everywhere to some function $u$.  If one denotes by
$\cF\colon L^p(\Omega)\to L^1(\Omega)$ the superposition operator
induced by $f$, i.e., $\cF(v)(x)\coloneqq f(v(x))$, then the
result in \cite{MR699419} implies that $u\in L^p(\Omega)$ and
\begin{equation}
  \label{eq:26}
  \cF(u_n)-\cF(u_n-u)\to\cF(u)
  \qquad\text{in $L^1(\Omega)$, as $n\to\infty$.}
\end{equation}
The same conclusion is obtained in that paper for more general
functions, imposing conditions that are satisfied for continuous
convex $f$ with $f(0)=0$, and imposing additional conditions on
the sequence $(u_n)$.

A different approach to the decoupling of superposition operators
along sequences of functions rests on certain regularity
assumptions on $f$.  For example, assume that $f\in C^1(\dR)$ satisfies
\begin{equation}
  \label{eq:8}
  \sup_{t\in\dR}\frac{\abs{f'(t)}}{\abs{t}^{p-1}}<\infty.
\end{equation}
Then the proof of \cite[Lemma~8.1]{MR1400007} can easily be
extended to obtain \eqref{eq:26}.  See also the slightly more
general \cite[Lemma~1.3]{MR1885519}, where $f$ is allowed to
depend on $x$ explicitly.

Our aim is to give a decoupling result under a different set of
hypotheses that applies to a much larger class of functions $f$
than considered above, within a certain range of exponents $p$.
In particular, we do not impose any convexity type assumptions on
$f$ as was done in \cite{MR699419}, nor any regularity
assumptions as in \cite{MR1400007,MR1885519} apart from
continuity.  The price we pay for relaxing the hypotheses on $f$
is that we need to restrict the range of allowed growth exponents
$p$ in comparison with \cite{MR699419}, that we need to assume
some type of translation invariance for $\Omega$, and that the
decoupling result only applies to a smaller set of admissible
sequences, namely sequences that converge weakly in
$H^1(\Omega)$.  Nevertheless, the numerous applications in the
Calculus of Variations for PDEs where these extra assumptions are
satisfied justify the new set of hypotheses.

To keep the presentation simple and highlight the main idea, we
only treat the case $\Omega=\dR^N$.  From here on, function
spaces are taken over $\dR^N$ unless otherwise noted.  It would
be possible to consider other domains or superposition operators
between other spaces, and we plan to do so in forthcoming work.
Nevertheless, we do allow a periodic dependency of $f$ on the
space variable.

To explain our results we formalize the notion of decoupling:
\begin{definition}
  \label{def:bl-splitting-along-sequence} 
  Suppose that $X$ and $Y$ are Banach spaces.  Consider a map
  $\cF\colon X\to Y$, a sequence $(u_n)\subseteq X$ and $u\in X$.
  We say that $\cF$ \emph{BL-splits along $(u_n)$ with respect to
    $u$} (BL being an abbreviation for Br\'ezis-Lieb) if
  \begin{equation*}
    \norm{\cF(u_n)- \cF(u_n-u) - \cF(u)}_Y\to 0.
  \end{equation*}
  We say that $\cF$ \emph{almost BL-splits along $(u_n)$ with
    respect to $u$} if, starting with any subsequence of $(u_n)$,
  we can pass to a subsequence such that there is a sequence
  $(v_n)\subseteq X$ such that $\norm{v_n- u}_X\to 0$ and
  \begin{equation*}
    \norm{\cF(u_n)- \cF(u_n-v_n) - \cF(u)}_Y\to 0.
  \end{equation*}

  If $u$ is a limit of $(u_n)$ in some unambiguous sense then we
  frequently omit to mention that (almost) BL-splitting is
  \emph{with respect to $u$}.
\end{definition}

By \cite{MR699419}, the map $f(u)=\abs{u}^p$ induces a map
$\cF\colon L^p\to L^1$ that BL-splits along pointwise a.e.\
converging bounded sequences in $L^p$ with respect to their
pointwise a.e.\ limits.  On the other hand, the technique used to
prove \cite[Lemma~3.2]{MR2088936} (and the related results in
\cite{MR2321894,MR2356423}) yields the following: if
$f\in C(\dR)$ satisfies
\begin{equation}\label{eq:3}
  \sup_{t\in\dR}\frac{\abs{f(t)}}{\abs{t}^p}<\infty  
\end{equation}
then the induced superposition operator $\cF\colon L^p\to L^1$
almost BL-splits along any $L^p_\rmloc$-converging bounded
sequence in $L^p$ with respect to its limit in $L^p_\rmloc$, see
Theorem~\ref{thm:almost-bl-splitting}\ref{item:1} below.  This
result is basically Lion's approach, with a simplifying twist.
If in addition $\cF$ is uniformly continuous on bounded subsets
of $L^p$ then it is easy to see that it BL-splits along any
$L^p_\rmloc$-converging bounded sequence in $L^p$ with respect to
its limit in $L^p_\rmloc$, see \cite[Lemma~6.3]{MR2216902}.  For
example, this holds true if \eqref{eq:8} is satisfied.

We illustrate the distinction between BL-splitting and almost
BL-splitting by the following examples:
\begin{example}
  \label{exa:oscillation} If $p>1$ and if either
  $f(t)\coloneqq\cos(\pi t)\abs{t}^p$ or
  $f(t)\coloneqq\cos(\pi/t)\abs{t}^p$ then there is a bounded
  sequence $(u_n)$ in $L^p$ that converges in $L^p_\rmloc$ and
  pointwise a.e.\ to a function $u$ such that the induced
  continuous superposition operator $\cF\coloneqq L^p\to L^1$
  does not BL-split along any subsequence of $(u_n)$ with respect
  to $u$.  On the other hand, $\cF$ almost BL-splits along any
  $L^p_\rmloc$-converging bounded sequence in $L^p$ with respect
  to its limit in $L^p_\rmloc$.  Hence $\cF$ is not uniformly
  continuous on bounded subsets of $L^p$ and neither the general
  conditions used in \cite{MR699419} nor \eqref{eq:8} are
  satisfied for $f$ in these examples.
\end{example}

The sequences mentioned in the example are provided in
Section~\ref{sec:constr-exampl} below.

Our main interest is to avoid condition \eqref{eq:8}, or any
other conditions on $f$ that ensure uniform continuity on bounded
subsets of $L^p$ (e.g., a local H\"older condition, together with
an appropriate growth bound on the H\"older constants on bounded
intervals).  Our result below states that it is sufficient to
restrict to bounded subsets of $H^1$ instead.

In this context we now formulate our main theorem, in a slightly
more general setting than what we considered above.
A function $f\colon \dR^N \times \dR\to\dR$ is a
\emph{Caratheodory function} if $f$ is measurable and if
$f(x,\cdot)$ is continuous for almost every $x\in\dR^N$.  The
induced superposition operator on functions $u\colon \dR^N\to\dR$
is then given by $\cF(u)(x) \coloneqq f(x,u(x))$.  If $A$ is a
real invertible $N\times N$-matrix then $f$ is said to be
$A$-periodic in its first argument if $f(x+Ak,t)=f(x,t)$ for all
$x\in\dR^N$, $k\in\dZ^N$, and $t\in\dR$.

Denote by $2^*\coloneqq2N/(N-2)$ if $N\ge3$ and
$2^*\coloneqq\infty$ if $N=1$ or $N=2$ the critical Sobolev
exponent for $H^1$.  Recall the continuous and compact embedding
of the Sobolev space $H^1(U)$ in $L^p(U)$ for
$p\in[2,2^*)$ if $U\subseteq\dR^N$ is a bounded domain.

\begin{theorem}
  \label{thm:main} Consider $\mu>0$, $\nu\ge1$, and $C_0>0$, such
  that $p\coloneqq\mu\nu\in (2,2^*)$.  Suppose that
  $f\colon\dR^N\times\dR \to\dR$ is a Caratheodory function that
  satisfies
  \begin{equation}
    \label{eq:1}
    \abs{f(x,t)}\le C_0\abs{t}^\mu
    \qquad\text{for all } x\in\dR^N,\ t\in\dR,
  \end{equation}
  and which is $A$-periodic in its first argument, for some
  invertible matrix $A\in\dR^{N\times N}$.  Denote by $\cF \colon
  L^p\to L^\nu$ the continuous superposition
  operator induced by $f$.  Then $\cF$ is uniformly continuous on
  bounded subsets of $H^1$ with respect to the
  $L^p$-$L^\nu$-norms and hence also with respect to the
  $H^1$-$L^\nu$-norms.  Moreover, $\cF\colon H^1\to
  L^\nu$ BL-splits along weakly convergent sequences in
  $H^1$ with respect to their weak limit.
\end{theorem}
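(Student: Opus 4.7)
The approach is to first establish the uniform continuity claim---by a contradiction argument based on Vitali's theorem, with tightness being the main obstacle---and then derive BL-splitting as a consequence via a compactly-supported approximation.

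\textbf{Uniform continuity.} Arguing by contradiction, I assume there are bounded sequences $(u_n), (v_n)$ in $H^1(\dR^N)$ with $\|u_n - v_n\|_p \to 0$ yet $\|\cF(u_n) - \cF(v_n)\|_\nu \ge \epsilon > 0$. Using the local compactness of $H^1(B_R) \hookrightarrow L^p(B_R)$ together with a diagonal extraction, I pass to a subsequence along which $u_n \to u$ almost everywhere, so that $v_n \to u$ almost everywhere as well; the Carath\'eodory continuity of $f$ yields $\cF(u_n) - \cF(v_n) \to 0$ almost everywhere. To upgrade this to convergence in $L^\nu$ I would invoke Vitali's theorem, which requires uniform integrability of $(|\cF(u_n) - \cF(v_n)|^\nu)$. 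Uniform absolute continuity is routine: it follows from $|\cF(u_n) - \cF(v_n)|^\nu \le C(|u_n|^p + |v_n|^p)$, H\"older's inequality, and the uniform $L^{2^*}$-bound coming from $H^1 \hookrightarrow L^{2^*}$ and $p < 2^*$. The \emph{main obstacle} is tightness at spatial infinity: individual $|\cF(u_n)|^\nu$ need not be tight when profiles of $(u_n)$ travel to infinity, so tightness of the difference must come from combining the $A$-periodicity with the $L^p$-closeness of $u_n$ and $v_n$. I would address this by a translation-concentration argument: if $\|\cF(u_n) - \cF(v_n)\|_{L^\nu(B_R(y_n))} \ge \delta$ for lattice shifts $y_n \in A\dZ^N$ (to which one may restrict by periodicity of $\cF$), then translating by $-y_n$ yields new bounded $H^1$-sequences with $L^p$-difference still tending to zero and a common weak $H^1$-limit; the local Vitali theorem on the bounded domain $B_R$ then forces their $L^\nu(B_R)$-difference to tend to zero, a contradiction. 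An iterative profile extraction, bounded by the $H^1$-energy budget, leaves a residual vanishing in Lions' sense, whose $\cF$-image tends to zero in $L^\nu$ by Lions' lemma in $H^1$.

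\textbf{BL-splitting.} Let $\chi_R \in C_c^\infty(\dR^N)$ be a cutoff equal to $1$ on $B_R$ and supported in $B_{2R}$, and set $u_R := \chi_R u$, so that $u_R \to u$ in $H^1$ as $R \to \infty$. Decompose
\[
\cF(u_n) - \cF(u_n - u) - \cF(u) = \bigl[\cF(u_n) - \cF(u_n - u_R) - \cF(u_R)\bigr] + \bigl[\cF(u_n - u_R) - \cF(u_n - u)\bigr] + \bigl[\cF(u_R) - \cF(u)\bigr].
\]
The third bracket tends to zero in $L^\nu$ as $R \to \infty$ by continuity of $\cF : L^p \to L^\nu$ and $u_R \to u$ in $L^p$. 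The middle bracket tends to zero in $L^\nu$ uniformly in $n$ as $R \to \infty$ by the uniform continuity just proved, since its two arguments are bounded in $H^1$ and their $L^p$-difference equals $\|u - u_R\|_p$. The first bracket vanishes outside $B_{2R}$ (there $u_R = 0$, $\cF(u_R) = 0$, and $u_n - u_R = u_n$); on the bounded domain $B_{2R}$ it converges in $L^\nu$ as $n \to \infty$ by a direct Vitali on a set of finite measure (using a.e.\ convergence $u_n \to u$ and the uniform $L^{2^*}$-bound) to the residual $\cF(u) - \cF(u - u_R) - \cF(u_R)$, which is supported in $B_{2R} \setminus B_R$ and has $L^\nu$-norm bounded by $C\|u\|_{L^p(B_{2R} \setminus B_R)}^\mu$, vanishing as $R \to \infty$. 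A triangle-inequality argument---taking $\limsup$ in $n$ for fixed $R$ and then letting $R \to \infty$---concludes.
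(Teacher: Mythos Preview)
Your BL-splitting argument is correct and is essentially the paper's: both replace $u$ by a compactly supported truncation, use Vitali (or equivalently continuity of $\cF$ on $L^p$ of a bounded domain) to handle the compactly supported piece, and invoke the uniform continuity to pass from the truncation back to $u$.

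The uniform continuity argument, however, has a genuine gap. Your translation-concentration step correctly shows that
\[
\sup_{y}\bigl\lVert \cF(u_n)-\cF(v_n)\bigr\rVert_{L^\nu(B_R(y))}\to 0,
\]
but this is \emph{not} tightness; it is the opposite phenomenon (vanishing of local mass). A spread-out sequence like $g_n=n^{-1}\chi_{[0,n]}$ satisfies your conclusion yet has unit total mass, so you cannot feed this into Vitali on $\dR^N$. Your next sentence (``an iterative profile extraction \ldots\ leaves a residual vanishing in Lions' sense, whose $\cF$-image tends to zero'') does not close the gap either: even if the residual $r_n$ of $u_n$ satisfies $\lvert r_n\rvert_p\to 0$, this only controls $\cF(r_n)$, not $\cF(u_n)-\cF(v_n)$. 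To pass from one to the other you must show that subtracting a translated profile from \emph{both} $u_n$ and $v_n$ preserves $\lVert\cF(u_n)-\cF(v_n)\rVert_\nu$ in the limit. That nonlinear decoupling is exactly the technical heart of the paper's proof: it is obtained there via a concentration-function argument and an $n$-dependent cut-off $v^k_n$ of the profile, yielding
\[
\bigl\lVert\cF(u^k_{i,n})-\cF(u^{k+1}_{i,n})-\cF(a^k_n\star v^k)\bigr\rVert_\nu\to 0\qquad(i=1,2),
\]
after which the profile contribution cancels in the difference and one can iterate and diagonalise.

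That said, your translation-concentration observation can be turned into a proof that is \emph{shorter} than the paper's, bypassing profile extraction entirely. Set $g_n=\lvert\cF(u_n)-\cF(v_n)\rvert^\nu$ and $\epsilon_n=\sup_{k\in\dZ^N}\int_{Q_k}g_n$, where $Q_k$ are unit cubes; your step gives $\epsilon_n\to 0$. By the growth bound and Sobolev on each cube, $\int_{Q_k}g_n\le C\bigl(\lVert u_n\rVert_{H^1(Q_k)}^p+\lVert v_n\rVert_{H^1(Q_k)}^p\bigr)$. Writing $c_k=\int_{Q_k}g_n$ and interpolating $c_k=c_k^{1-2/p}c_k^{2/p}\le \epsilon_n^{1-2/p}c_k^{2/p}$, one sums over $k$ using subadditivity of $t\mapsto t^{2/p}$ (here $p>2$ is essential) to get $\int g_n\le C\epsilon_n^{1-2/p}\bigl(\lVert u_n\rVert_{H^1}^2+\lVert v_n\rVert_{H^1}^2\bigr)\to 0$. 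This is precisely the mechanism behind Lions' vanishing lemma, applied not to $\lvert u_n\rvert^2$ but to $g_n$ via the domination $g_n\le C(\lvert u_n\rvert^p+\lvert v_n\rvert^p)$. So your instinct to invoke ``Lions' lemma'' is right, but it should be applied directly to $g_n$ rather than to a profile-decomposition residual.
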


Our proof of Theorem~\ref{thm:main} has similarities with the
proof of \cite[Theorem~3.1]{MR2294665} but involves an
intermediate cut-off step in the proof of
Theorem~\ref{thm:almost-bl-splitting}.  Essentially, we first
prove almost BL-splitting of $\cF$ along weakly converging
sequences in $H^1$ with respect to their weak limit, using the
concentration function and the compactness of the Sobolev
embedding $H^1(U)\hookrightarrow L^p(U)$, for $p\in[2,2^*)$ and
for a bounded domain $U$.  Then we collect the possible mass loss
at infinity along subsequences with the help of Lions' Vanishing
Lemma, employing the assumption $p>2$.

\begin{remark}\label{rem:counterexamples}
  Theorem~\ref{thm:main} applies in particular to the functions
  considered in Example~\ref{exa:oscillation} when $\nu=1$ and
  $\mu=p\in(2,2^*)$.  On the other hand, for
  $f(t)\coloneqq\cos(\pi/t)t^2$ there is a sequence in $H^1$ that
  converges weakly but that possesses no subsequence along which
  $f\colon H^1\to L^1$ BL-splits with respect to the weak limit.
  The same is true for $f(t)\coloneqq\cos(\pi t)t^{2^*}$.  In
  this sense, Theorem~\ref{thm:main} is optimal, that is, it
  cannot be extended in this generality to include the cases
  $p=\mu\nu=2$ and $p=\mu\nu=2^*$.  The existence of these
  counterexamples is proved in Section~\ref{sec:constr-exampl}.

  Of course, by Sobolev's embedding theorem, a map $L^p\to L^\nu$
  that BL-splits along $L^p_\loc$-converging bounded sequences in
  $L^p$ with respect to their limits in $L^p_\loc$ also BL-splits
  in $H^1$ along weakly convergent sequences with respect to
  their weak limits.  Therefore
  Theorem~\ref{thm:almost-bl-splitting}\ref{item:1}, together
  with \eqref{eq:8} (or the weaker H\"older condition with growth
  bound), yields BL-splitting maps along weakly convergent
  sequences in $H^1$ with respect to weak limits even for $p=2$
  and $p=2^*$.
\end{remark}
\begin{remark}
  The result also holds true in a slightly restricted sense for
  functions $f$ that are sums of functions as in
  Theorem~\ref{thm:main}, i.e., functions that satisfy merely
  \begin{equation*}
    \abs{f(x,t)}\le C_0(\abs{t}^{\mu_1}+\abs{t}^{\mu_2})
    \qquad\text{for all } x\in\dR^N,\ t\in\dR,
  \end{equation*}
  where $\mu_i\nu\in(2,2^*)$ for $i=1,2$.  In that case,
  $\cF\colon H^1\to L^\nu$ is uniformly continuous
  on bounded subsets of $H^1$ with respect to the
  $H^1$-$L^\nu$ norms, and $\cF$ BL-splits along weakly
  convergent sequences in $H^1$ with respect to their weak
  limits.
\end{remark}

\begin{remark}
  The uniform continuity of operators $\cF$ on bounded subsets of
  $H^1$ has been used, for example, in the proof of
  \cite[Lemma~3.4]{MR2294665}.  Nevertheless, we are not aware of
  a published proof of this fact, which is nontrivial in the
  generality stated in Theorem~\ref{thm:main}.  Note that the
  uniform continuity of $\cF\colon H^1(U)\to L^\nu(U)$ on bounded
  subsets of $H^1(U)$ is trivial if $U$ is bounded, by the
  compact Sobolev embedding $H^1(U)\subseteq L^p(U)$.
\end{remark}

We now discuss additional aspects and applications of the results
presented above.  To this end we return to a simple setting on
$\dR^N$.  Suppose that $f\in C(\dR)$ satisfies \eqref{eq:3} with
$p\in(2,2^*)$ and consider the functional $\Phi\colon H^1\to\dR$
given by
\begin{equation*}
  \Phi(u)\coloneqq\int_{\dR^N}f(u).
\end{equation*}
To prove the existence of a minimizer in typical variational
problems involving $\Phi$, Lions~\cite{MR778970,MR778974}
introduces the concentration-compactness principle.  It is a tool
to exclude the possibility of vanishing and of dichotomy along a
minimizing sequence $(u_n)$, in order to obtain compactness of
the sequence.  Here we are only concerned with dichotomy.  In
this case, the sequence $(u_n)$ is approximated by
$(u^1_n+u^2_n)$, where
$\dist(\supp(u^1_n), \supp(u^2_n))\to\infty$.  For \emph{local}
functionals like $\Phi$ it then follows easily that $\Phi(u_n)$
is approximated by $\Phi(u^1_n)+\Phi(u^2_n)$, a fact that yields,
together with a hypothesis about energy levels, a contradiction.
Clearly, the same can be achieved if $\Phi$ BL-splits along
$(u_n)$ in a suitable way.  Before our Theorem~\ref{thm:main},
Lions' approach to concentration compactness was more general, in
that, besides continuity and appropriate growth bounds, no extra
regularity hypotheses need to be placed on $f$.  On the other
hand, the arguments are more involved than when using
BL-splitting because one has to insert cut-off functions to
obtain sequences $u^1_n$ and $u^2_n$ with disjoint supports.  As
a consequence, it is difficult to give a purely functional
(abstract) presentation of Lions' approach.

To explain the advantage of an abstract presentation using
BL-splitting, we note that to treat nonlocal functionals of
convolution type, e.g.,
\begin{equation*}
  \Psi(u)\coloneqq\int_{\dR^N}(f*h(u))h(u),
\end{equation*}
the property of disjoint supports is not as effective anymore.
In the convolution, the supports get ``smeared out'' and one has
to control the interaction with more involved estimates, see
page~123 of~\cite{MR778970}.  This is aggravated when one also
has to consider the decoupling of \emph{derivatives} of $\Psi$.
We have shown in \cite{MR2216902} that using BL-splitting is
effective in situations involving nonlocal functionals.
Moreover, BL-splitting even survives certain nonlocal operations,
like the saddle point reduction,
see~\cite[Theorem~5.1]{MR2216902}.

For particular cases there are other approaches to avoid
conditions on $f$ besides continuity and growth bounds.  We
reformulate and simplify the following cited results slightly to
adapt them to our setting and notation.  In \cite{MR2151860} we
proved, for $f\in C(\dR)$ satisfying \eqref{eq:1} with
$\mu\coloneqq p-1$, and setting $\nu\coloneqq p/(p-1)$, that the
map $\Gamma\colon H^1\to H^{-1}$, given by
\begin{equation*}
  \Gamma(u)v\coloneqq\int_{\dR^N}f(u)v,
\end{equation*}
BL-splits along a weakly convergent sequence if the weak limit is
a function tending to $0$ as $\abs{x}\to\infty$.  Another result
was given in \cite[Lemma~7.2]{MR2491945}, when $f\in C(\dR)$
satisfies \eqref{eq:1} with $\mu\coloneqq p-2$ and
$\nu\coloneqq p/(p-2)$: The map
$\Lambda\colon H^1\to\cL_2(H^1,\dR)$ (here $\cL_2(H^1,\dR)$
denotes the space of bounded bilinear maps from $H^1$ into
$\dR$), given by
\begin{equation*}
  \Lambda(u)[v,w]\coloneqq\int_{\dR^N}f(u)vw,
\end{equation*}
is uniformly continuous on bounded subsets of $H^1$.  Together
with the almost BL-splitting of $\Lambda$ given by
Theorem~\ref{thm:almost-bl-splitting} below this yields
BL-splitting for $\Lambda$ along weakly convergent sequences.
Note that the idea of the proof of the latter result does not
apply for the maps $\Phi$ and $\Gamma$ defined above (under the
respective growth bounds on $f$).  In both cases our result here
is stronger, since we show uniform continuity and BL-splitting
into the spaces $L^\nu$, which are continuously embedded in
$H^{-1}$ and $\cL_2(H^1,\dR)$, respectively.

A different application of Theorem~\ref{thm:main}, that is
independent of variational methods, is the general study of maps
that are uniformly continuous on a subset of an infinite
dimensional Hilbert space.  These play a role in infinite
dimensional potential theory \cite{MR0227747,MR3109657} or, more
generally, in the theory of stochastic equations in infinite
dimensions \cite{MR1724248,MR3236753,MR1985790}.

The paper is structured as follows.  In
Section~\ref{sec:proof-theorem} we treat almost BL-splitting of
$\cF$ along bounded sequences in $L^p$ that converge in
$L^p_\rmloc$, and along weakly convergent sequences in $H^1$.  In
Section~\ref{sec:uniform-continuity} we prove the uniform
continuity of $\cF$ on bounded subsets of $H^1$ and BL-splitting
of $\cF$ along weakly convergent sequences.  In
Section~\ref{sec:constr-exampl} we prove the claims made in
Example~\ref{exa:oscillation}.

\section{Almost BL-Splitting}
\label{sec:proof-theorem}

In this section we prove a result on the almost BL-splitting of
superposition operators in $L^p$ along bounded sequences that
converge in $L^p_\rmloc$, and in $H^1$ along weakly convergent
sequences.  This is a variation on Lions' approach in
\cite{MR778970}.  Note that here the periodicity assumption in
$x$ is not needed.

If $r\in[1,\infty]$ then denote by $\abs{\,\cdot\,}_r$ the norm
of $L^r$.  

\begin{theorem}
  \label{thm:almost-bl-splitting} Consider $\mu>0$, $\nu\ge1$,
  and $C_0>0$, such that $p\coloneqq\mu\nu\ge 1$.  Suppose that
  $f\colon\dR^N\times\dR \to\dR$ is a Caratheodory function that
  satisfies \eqref{eq:1}.  Denote by $\cF$ the superposition
  operator on real functions induced by $f$.
  \begin{enumerate}[label=\textup{(\alph*)}]
  \item \label{item:1} If $(u_n)\subseteq L^p$ is bounded
    and converges in $L^p_\loc$ to a function $u$, then
    $u\in L^p$ and $\cF \colon L^p\to L^\nu$
    almost BL-splits along $(u_n)$ with respect to $u$.
  \item \label{item:2} If $p\in[2,2^*)$ and $u_n\weakto u$ in
    $H^1$ then $\cF\colon H^1\to L^\nu$
    almost BL-splits along $(u_n)$ with respect to $u$.
  \item \label{item:3} In \ref{item:2}, if in addition
    $(\bar{u}_n)\subseteq H^1$ converges weakly and
    $\abs{u_n-\bar{u}_n}_p\to0$ as $n\to\infty$ then
    $\bar{u}_n\weakto u$ in $H^1$ and $\cF$ almost
    BL-splits along $(u_n)$ and $(\bar{u}_n)$ with respect to
    $u$, preserving subsequences and the auxiliary sequence
    $(v_n)$ in the following sense: for any subsequence $n_k$
    there is a subsequence $n_{k_\ell}$ and $(v_\ell)$ such that
    $v_\ell\to u$ in $H^1$ and, writing
    $u_\ell\coloneqq u_{n_{k_\ell}}$ and
    $\bar{u}_\ell\coloneqq \bar{u}_{n_{k_\ell}}$ we have
    \begin{align*}
      \cF(u_\ell)-\cF(u_\ell-v_\ell)&\to\cF(u)\\
      \shortintertext{and}
      \cF(\bar{u}_\ell)-\cF(\bar{u}_\ell-v_\ell)&\to\cF(u).
    \end{align*}
  \end{enumerate}
\end{theorem}

For the proof, let $B_R$ denote, for $R>0$, the open ball in
$\dR^N$ with center $0$ and radius $R$.

\begin{proof}
  \textbf{\ref{item:1}:} From \eqref{eq:1} and from the theory of
  superposition operators~\cite{MR1066204} it follows that
  $\cF\colon L^p(U)\to L^\nu(U)$ is continuous for any open
  subset $U$ of $\dR^N$.  For $n\in\dN$ define
  $Q_n\colon[0,\infty)\to[0,\infty)$ by
  \begin{equation*}
    Q_n(R):=\int_{B_R}\abs{u_n}^{p}. 
  \end{equation*}
  The functions $Q_n$ are uniformly bounded and nondecreasing.
  We may assume that $(Q_n)$ converges pointwise almost
  everywhere to a bounded nondecreasing function $Q$
  \cite{MR778970}.  It is easy to build a sequence
  $R_n\to\infty$ such that for every $\varepsilon>0$ there is $R>0$,
  arbitrarily large, with
  \begin{equation*}
    \limsup_{n\to\infty}(Q_n(R_n)-Q_n(R))\le\varepsilon. 
  \end{equation*}
  Hence 
  \begin{equation}\label{eq:10}
    \forall\varepsilon>0\ \exists R>0\colon
    \limsup_{n\to\infty}\int_{B_{R_n}\ssm
      B_R}\abs{u_n}^p\le\varepsilon
    \qquad\text{and}\qquad
    \int_{\dR^N\ssm B_R}\abs{u}^p\le\varepsilon.
  \end{equation}
  
  Consider a smooth cut off function $\eta \colon[0,\infty) \to
  [0,1]$ such that $\eta\equiv1$ on $[0,1]$ and $\eta\equiv0$ on
  $[2,\infty)$.  Set $v_n(x)\coloneqq\eta(2\abs{x}/R_n)u(x)$.
  Then
  \begin{equation}
    \label{eq:12}
    \lim_{n\to\infty}v_n= u\qquad\text{in $L^p$.}
  \end{equation}
  From the continuity of $\cF$ on $L^{p}(B_R)$, $v_n=u$ on $B_R$,
  $\lim_{n\to\infty} u_n = u$ in $L^p(B_R)$, and $f(x,0)=0$ for
  a.e.\ $x\in\dR^N$ we obtain
  \begin{multline*}
    \lim_{n\to\infty}\int_{B_R}
    \bigabs{f(x,u_n)-f(x,u_n-v_n)-f(x,v_n)}^{\nu}\dint x\\
    =\lim_{n\to\infty}\int_{B_R}
    \bigabs{f(x,u_n)-f(x,u_n-u)-f(x,u)}^{\nu}\dint x=0. 
  \end{multline*}
  Since $v_n\equiv0$ in $\dR^N\ssm B_{R_n}$, this in turn
  yields for any $\varepsilon>0$ and $R$ chosen accordingly, as in
  \eqref{eq:10},
  \begin{multline*}
    \limsup_{n\to\infty}\int_{\dR^N}
    \abs{f(x,u_n)-f(x,u_n-v_n)-f(x,v_n)}^{\nu}\dint x\\
    \begin{aligned}
      &= \limsup_{n\to\infty}\int_{B_{R_n}\ssm B_R}
      \abs{f(x,u_n)-f(x,u_n-v_n)-f(x,v_n)}^{\nu}\dint x\\
      &\le C\limsup_{n\to\infty}\int_{B_{R_n}\ssm B_R}
      (\abs{u_n}^\mu+\abs{u_n-v_n}^\mu+\abs{v_n}^\mu)^{\nu}\\
      &\le C\limsup_{n\to\infty}\int_{B_{R_n}\ssm B_R}
      (\abs{u_n}^{p}+\abs{u}^{p})\\
      &\le C\varepsilon,
    \end{aligned}
  \end{multline*}
  where $C$ is independent of $\varepsilon$.  Letting
  $\varepsilon$ tend to $0$ and using \eqref{eq:12} we obtain
  \begin{equation*}
    \lim_{n\to\infty}\bigabs{\cF(u_n)-\cF(u_n-v_n)-\cF(u)}_\nu= 0.
  \end{equation*}

  \textbf{\ref{item:2}:} The continuous embedding
  $H^1\hookrightarrow L^p$ implies that $(u_n)$ is bounded in
  $L^p$, and the compact embedding $H^1(U)\hookrightarrow L^p(U)$
  for bounded $U$ implies that $u_n\to u$ in $L^p_\rmloc$.
  Defining $v_n$ as in \ref{item:1} we therefore obtain that
  \begin{equation}
    \label{eq:13}
    v_n\to u\qquad\text{in } H^1,
  \end{equation}
  and $\cF$ almost BL-splits along $(u_n)$ with respect to $u$ by \ref{item:1}.

  \textbf{\ref{item:3}:} Since $\abs{u_n-\bar{u}_n}_p\to0$ and
  $u_n\to v$ in $L^p_\rmloc$ it follows that $\bar u_n\weakto v$
  in $H^1$.  Taking $R$ large enough, \eqref{eq:10} also holds
  true if we replace $u_n$ by $\bar{u}_n$.  Therefore, after
  passing to a subsequence for $(u_n)$, and using the same
  subsequence for $(\bar u_n)$, we obtain
  \begin{equation*}
    \lim_{n\to\infty}\bigabs{\mathcal{F}(\bar{u}_n)
    -\mathcal{F}(\bar{u}_n-v_n)-\mathcal{F}(u)}_\nu=0.\qedhere
  \end{equation*}
\end{proof}

\section{Uniform Continuity}
\label{sec:uniform-continuity}

Here we prove uniform continuity on bounded subsets of $H^1$,
making use of the periodicity of $f$ in $x$.  As a consequence,
we also obtain BL-splitting along weakly convergent sequences in
$H^1$.

For simplicity we will only prove the case $A=I$ (the identity
transformation).  The general case follows in an analogous
manner.  Denote the respective translation action of the additive
group $\dZ^N$ on functions $u\colon\dR^N\to\dR$ by
\begin{equation*}
  (a\star u)(x)\coloneqq u(x-a),\qquad a\in\dZ^N,\ x\in\dR^N.
\end{equation*}
Let $\scp{\cdot,\cdot}$ denote the standard scalar product in
$H^1$, defined by
\begin{equation*}
  \scp{u,v}\coloneqq\int_{\dR^N}(\nabla u\cdot\nabla v+uv),
\end{equation*}
and let $\norm{\,\cdot\,}$ denote the associated norm.  Also
denote by $\wlim$ the weak limit of a weakly convergent sequence.

We first recall a functional consequence of Lions' Vanishing
Lemma, \cite[Lemma~I.1.]{MR778974}.
\begin{lemma}
  \label{lem:lions} Suppose for a sequence $(u_n)\subseteq H^1$
  that $a_n\star u_n\weakto 0$ in $H^1$ for every sequence
  $(a_n)\subseteq\dZ^N$.  Then $u_n\to0$ in $L^p$ for all
  $p\in(2,2^*)$.
\end{lemma}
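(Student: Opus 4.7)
The plan is to argue by contradiction and reduce to the original, concentration-function form of Lions' Vanishing Lemma. First, I would specialize the hypothesis to the constant sequence $a_n\equiv 0$, which immediately yields $u_n\weakto 0$ in $H^1$; in particular $(u_n)$ is bounded in $H^1$.

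Next, fix $p\in(2,2^*)$ and assume for contradiction that $u_n\not\to 0$ in $L^p$. Lions' Vanishing Lemma in its original formulation then provides, after passing to a subsequence, $R>0$ and $\delta>0$ together with points $y_n\in\dR^N$ satisfying
\[
  \int_{y_n+B_R}\abs{u_n}^2\ge\delta
  \qquad\text{for all }n.
\]

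The decisive step is to quantize the concentration points to the integer lattice. I would choose $a_n\in\dZ^N$ with $\abs{y_n-a_n}\le\sqrt{N}/2$, so that $y_n+B_R\subseteq a_n+B_{R'}$ for $R'\coloneqq R+\sqrt{N}/2$. Setting $v_n\coloneqq(-a_n)\star u_n$ and changing variables, this inclusion yields
\[
  \int_{B_{R'}}\abs{v_n}^2\ge\delta.
\]

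To close the argument, I would apply the hypothesis to the integer sequence $(-a_n)_n\subseteq\dZ^N$ to obtain $v_n\weakto 0$ in $H^1$; the locally compact Sobolev embedding $H^1\hookrightarrow L^2_\loc$ then gives $v_n\to 0$ in $L^2(B_{R'})$, contradicting the last displayed inequality. The only subtle point is this rounding from continuous to integer translations, which costs only a bounded enlargement of the radius; all other steps are standard consequences of Lions' Lemma and weak-to-strong convergence on bounded domains.
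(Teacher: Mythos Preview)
Your argument is correct and is essentially the same as the paper's: both use the trivial sequence $a_n\equiv0$ to get boundedness, round the concentration centers to $\dZ^N$ at the cost of enlarging the ball, and derive a contradiction from Rellich--Kondrakov together with Lions' Vanishing Lemma. The only cosmetic difference is that the paper first establishes $\sup_{y}\int_{y+B_1}\abs{u_n}^2\to0$ directly and then invokes Lions' Lemma, whereas you run the contrapositive; the substance is identical.
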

\begin{proof}
  Note first that $(u_n)$ is bounded in $H^1$ since $u_n\weakto0$
  in $H^1$.  We claim that
  \begin{equation}
    \label{eq:2}
    \sup_{y\in\dR^N}\int_{y+B_1}\abs{u_n}^2\to0
    \qquad\text{as }n\to\infty.
  \end{equation}
  If the claim were not true there would exist $\varepsilon>0$
  and a sequence $(y_n)\subseteq\dR^N$ such that, after passing
  to a subsequence of $(u_n)$,
  \begin{equation*}
    \int_{y_n+B_1}\abs{u_n}^2\ge\varepsilon.
  \end{equation*}
  Pick $(a_n)\subseteq\dZ^N$ such that $\abs{a_n+y_n}_\infty<1$
  for all $n$.  With $R\coloneqq\sqrt{N}+1$ it follows that
  $a_n+y_n+B_1\subseteq B_R$ and hence
  \begin{equation*}
    \int_{B_R}\abs{a_n\star u_n}^2\ge\varepsilon
  \end{equation*}
  for all $n$.  We reach a contradiction since $a_n\star
  u_n\weakto0$ in $H^1$ and hence $a_n\star u_n\to0$ in
  $L^2(B_R)$ by the theorem of Rellich and Kondrakov.  Therefore
  \eqref{eq:2} holds true.

  The claim of the theorem now follows from
  \cite[Lemma~I.1.]{MR778974} with $p=q=2$.  Compare also
  with \cite[Lemma~3.3]{MR2294665}.
\end{proof}
\begin{proof}[Proof of Theorem~\ref{thm:main}]
  We start by proving the uniform continuity.  Let
  $(u^0_{i,n})_{n\in\dN_0}$ be bounded sequences in $H^1$ for
  $i=1,2$ and set $C_1\coloneqq\max_{i=1,2}
  \limsup_{n\to\infty}\norm{u^0_{i,n}}$.  Suppose for a
  contradiction that
  \begin{equation}
    \abs{u^0_{1,n}-u^0_{2,n}}_p\to0 
    \qquad\text{as } 
    n\to\infty,\label{eq:4}
  \end{equation}
  and that there is $ C_2>0$ such that
  \begin{equation}
    \abs{\cF(u^0_{1,n})-\cF(u^0_{2,n})}_\nu\ge C_2
    \qquad\text{for all }n.\label{eq:6}
  \end{equation}
  
  Successively we will define infinitely many sequences
  $(a^k_n)_n\subseteq\dZ^N$ and $(u^k_{i,n})_n\subseteq H^1$,
  $i=1,2$, indexed by $k\in\dN_0$ and strictly increasing
  functions $\varphi_k\colon\dN\to\dN$ with the following
  properties:
  \begin{align}
    \max_{i=1,2}\limsup_{n\to\infty}\norm{u^k_{i,n}}&\le C_1,\label{eq:5}\\
    \lim_{n\to\infty}\abs{u^k_{1,n}-u^k_{2,n}}_p&=0,\label{eq:15}\\[.5ex]
    \liminf_{n\to\infty}\abs{\cF(u^{k}_{1,n})-\cF(u^{k}_{2,n})}_\nu
    &\ge C_2,\label{eq:11}\\
    \wlim_{n\to\infty}\xlr(){-a^\ell_{\psi^{k-1}_\ell(n)}}\star u^k_{i,n}&= 0
    &&\text{in $H^1$, if  $0\le\ell<k$, for
      $i=1,2$,}\label{eq:20}\\
    \shortintertext{and}
    \lim_{n\to\infty}\abs{a^m_{\psi^\ell_m(n)}-a^\ell_n}&=\infty
    &&\text{if }0\le m<\ell<k.\label{eq:29}
  \end{align}
  Here
  \begin{align*}
    \psi^k_\ell
    &\coloneqq
    \varphi_{\ell+1}\circ\varphi_{\ell+2}\circ\dots\circ\varphi_k
    &&\text{if } \ell=-1,0,1,\dots,k-1\\
    \psi^k_k
    &\coloneqq\id_{\dN}.
  \end{align*}

  We need to say something about the extraction of subsequences.
  In order to obtain $\varphi_k$, $(a^{k}_n)_n$, and
  $(u^{k+1}_{i,n})_n$ from $(u^k_{i,n})$, we first pass to a
  subsequence $(u^k_{i,\varphi_k(n)})_n$ of $(u^k_{i,n})_n$ and
  then use its terms in the construction.  Once the new sequences
  $(a^{k}_n)_n$ and $(u^{k+1}_{i,n})_n$ are built we may remove a
  finite number of terms at their start, modifying $\varphi_k$
  accordingly, with the goal of obtaining additional properties.
  Beginning with the following iteration there are no more
  retrospective changes to the sequences already built.  This is
  to assure a well defined infinite sequence of sequences, from
  which eventually we take the diagonal sequence.  In this
  setting it seems clearer to make the selection of subsequences
  explicit, contrary to what is usually done when using
  concentration compactness methods
  \cite{MR88e:35170,MR778970,MR778974} or when proving a
  variational splitting lemma.

  For $k=0$ the properties \eqref{eq:5}--~\eqref{eq:29} are
  fulfilled by the definition of $C_1$ and by \eqref{eq:4} and
  \eqref{eq:6}.  Assume now that \eqref{eq:5}--\eqref{eq:29} hold
  for some $k\in\dN_0$.  Denote by $W_k$ the set of $v\in H^1$
  such that there are a sequence $(a_n)\subseteq\dZ^N$ and a
  subsequence of $(u^k_{1,n})$ with
  $\wlim_{n\to\infty}a_n\star u^k_{1,n}=v$ in $H^1$.
  
  If $\wlim_{n\to\infty}a_n\star u^k_{1,n}=0$ in $H^1$ were true
  for all sequences $(a_n)\subseteq\dZ^N$, by
  Lemma~\ref{lem:lions} it would follow that $\lim_{n\to\infty}
  u^k_{1,n}=0$ in $L^p$.  Equation~\eqref{eq:15} and the
  continuity of $\cF$ on $L^p$ would lead to a contradiction with
  \eqref{eq:11}.  Therefore
  \begin{equation*}
    q_k\coloneqq \sup_{v\in W_k} \norm{v}\in(0,C_1].
  \end{equation*}

  Pick $v^k\in W_k$ such that
  \begin{equation}
    \label{eq:7}
    \norm{v^k}\ge \frac{q_k}{2}>0.
  \end{equation}
  There are $(a^k_n)_n\subseteq\dZ^N$ and a strictly increasing
  function $\varphi_k\colon\dN\to\dN$ such that
  $\wlim_{n\to\infty} (-a^k_n)\star u^k_{1,\varphi_k(n)}= v^k$ in $H^1$.  By
  \eqref{eq:15} and by
  Theorem~\ref{thm:almost-bl-splitting}\ref{item:2} and \ref{item:3}
  there exists a sequence $(v^k_n)_n\subseteq H^1$ such that
  \begin{align}
    \label{eq:28}
    \lim_{n\to\infty}v^k_n
    &= v^k,&&\text{in $H^1$,}\\
    \wlim_{n\to\infty}(-a^k_n)\star u^k_{i,\varphi_k(n)}
    &=v^k,&&\text{in $H^1$, for } i=1,2,\label{eq:9}
  \end{align}
  and
  \begin{equation*}
    \lim_{n\to\infty}\bigabs{\mathcal{F}((-a^k_n)\star u^k_{i,\varphi_k(n)})
    -\mathcal{F}((-a^k_n)\star
    u^k_{i,\varphi_k(n)}-v^k_n)-\mathcal{F}(v^k)}_\nu=0,
  \qquad i=1,2.
  \end{equation*}

  Set $u^{k+1}_{i,n}\coloneqq u^k_{i,\varphi_k(n)}-a^k_n\star v^k_n$.  By
  the equivariance of $\cF$ and the invariance of the involved
  norms under the $\dZ^N$-action,
  \begin{align}
    \label{eq:14}
    \lim_{n\to\infty}\bigabs{\mathcal{F}(u^k_{i,\varphi_k(n)})-\mathcal{F}(u^{k+1}_{i,n})
      -\mathcal{F}(a^k_n\star v^k)}_\nu&=0,
    &&\text{for $i=1,2$,}\\
  \intertext{and, since by \eqref{eq:9} $\norm{\,\cdot\,}^2$ BL-splits along
    $(-a^k_n)\star u^k_{i,\varphi_k(n)}$ with respect to $v^k$,}
    \label{eq:16}
    \lim_{n\to\infty}\bigabs{\norm{u^k_{i,\varphi_k(n)}}^2-\norm{u^{k+1}_{i,n}}^2
      -\norm{v^k}^2}&=0,&&\text{for $i=1,2$.}
  \end{align}
  Equations \eqref{eq:16} and \eqref{eq:5} (for $k$) imply that
  \begin{equation*}
    \max_{i=1,2}\limsup_{n\to\infty}\norm{u^{k+1}_{i,n}}\le C_1,
  \end{equation*}
  hence \eqref{eq:5} for $k+1$.  The
  definition of the sequences $u^{k+1}_{i,n}$ and \eqref{eq:15}
  (for $k$) imply that
  \begin{equation}\label{eq:30}
    \lim_{n\to\infty}\abs{u^{k+1}_{1,n}-u^{k+1}_{2,n}}_p
    =\lim_{n\to\infty}\abs{u^{k}_{1,\varphi_k(n)}-u^{k}_{2,\varphi_k(n)}}_p
    =0,
  \end{equation}
  hence \eqref{eq:15} for $k+1$.  It follows from \eqref{eq:14}
  and \eqref{eq:11} (for $k$) that
  \begin{equation}\label{eq:31}
    \liminf_{n\to\infty}\abs{\cF(u^{k+1}_{1,n})-\cF(u^{k+1}_{2,n})}_\nu
    =\liminf_{n\to\infty}\abs{\cF(u^{k}_{1,\varphi_k(n)})-\cF(u^{k}_{2,\varphi_k(n)})}_\nu
    \ge C_2,
  \end{equation}
  hence \eqref{eq:11} for $k+1$.  Last but not least, from
  \eqref{eq:20} (for $k$), \eqref{eq:7}, and \eqref{eq:9} it
  follows that
  \begin{equation}
    \label{eq:21}
    \lim_{n\to\infty}\abs{a^m_{\psi^k_m(n)}-a^k_n}
    =\infty\qquad\text{if } m<k.
  \end{equation}
  Since \eqref{eq:29} is true for $k$, together with
  \eqref{eq:21} we obtain \eqref{eq:29} for $k+1$.  Moreover,
  \eqref{eq:21}, \eqref{eq:20} (for $k$) and \eqref{eq:28} yield
  \begin{multline*}
    \wlim_{n\to\infty}(-a^\ell_{\psi^k_\ell(n)})\star u^{k+1}_{i,n}
    =\wlim_{n\to\infty}\biglr(){(-a^\ell_{\psi^{k-1}_\ell(\varphi_k(n))})
      \star u^{k}_{i,\varphi_k(n)}-(a^k_n-a^\ell_{\psi^k_\ell(n)})\star v^k_n}\\
    = 0,
    \qquad\text{in $H^1$, if } \ell<k.
  \end{multline*}
  By the definition of $a^k_n$,
  \begin{equation*}
    \wlim_{n\to\infty}(-a^k_n)\star u^{k+1}_{i,n}
    =\wlim_{n\to\infty}\biglr(){(-a^k_n)\star u^{k}_{i,\varphi_k(n)}-v^k_n}
    =0,
    \qquad\text{in $H^1$.}
  \end{equation*}
  This proves \eqref{eq:20} for $k+1$.

  We now skip a finite number of elements of the sequences
  constructed in this induction step and adapt $\varphi_k$
  accordingly.  Choosing $m\in\dN$ large enough, by \eqref{eq:30}
  and \eqref{eq:31} we obtain
  \begin{align*}
    \abs{u^{k+1}_{1,m+n}-u^{k+1}_{2,m+n}}_p&\le\frac1{k+1}\\
    \shortintertext{and}
    \abs{\cF(u^{k+1}_{1,m+n})-\cF(u^{k+1}_{2,m+n})}_\nu
    &\ge C_2-\frac1{k+1}
  \end{align*}
  for all $n\in\dN$.  Property \eqref{eq:29} (for $k+1$) implies
  that
  \begin{equation*}
    \lim_{n\to\infty}\abs{a^m_{\psi^k_m(n)}-a^\ell_{\psi^k_\ell(n)}}
    =\lim_{n\to\infty}\abs{a^m_{\psi^\ell_m(\psi^k_\ell(n))}-a^\ell_{\psi^k_\ell(n)}}
    =\infty,
    \qquad\text{if }m<\ell\le k.
  \end{equation*}
  Since $\norm{\,\cdot\,}^2$ BL-splits along weakly convergent
  sequences this yields, together with \eqref{eq:28}, that
  \begin{equation*}
    \lim_{n\to\infty}\biggnorm{\sum_{j=\ell}^{k}
      a^j_{\psi^{k}_j(n)}\star v^j_{\psi^{k}_j(n)}}^2
    =\sum_{j=\ell}^{k}\norm{v^j}^2
  \end{equation*}
  for all $\ell\le k$.  For large enough $m$ this implies
  \begin{equation*}
    \biggnorm{\sum_{j=\ell}^{k}a^j_{\psi^{k-1}_j(\varphi_k(m+n))}
      \star v^j_{\psi^{k-1}_j(\varphi_k(m+n))}}^2
    \le2\sum_{j=\ell}^{k}\norm{v^j}^2,
    \qquad\text{for all } n\in\dN\text{ and }\ell\le k.
  \end{equation*}
  Fixing $m$ with these properties, writing $u^{k+1}_{i,n}$,
  $a^k_n$, and $v^k_n$ instead of $u^{k+1}_{i,m+n}$, $a^k_{m+n}$,
  and $v^k_{m+n}$, respectively, and writing $\varphi_k(n)$
  instead of $\varphi_k(m+n)$, all properties proved above remain
  valid, and, in addition, the following hold true:
  \begin{align}
    \abs{u^{k+1}_{1,n}-u^{k+1}_{2,n}}_p&\le\frac1{k+1}\label{eq:24}\\
    \shortintertext{and}
    \abs{\cF(u^{k+1}_{1,n})-\cF(u^{k+1}_{2,n})}_\nu
    &\ge C_2-\frac1{k+1}\label{eq:25}
  \end{align}  
  for all $n\in\dN$ and
  \begin{equation}
    \label{eq:18}
    \biggnorm{\sum_{j=\ell}^{k}a^j_{\psi^{k}_j(n)}\star v^j_{\psi^{k}_j(n)}}^2
    \le2\sum_{j=\ell}^{k}\norm{v^j}^2,
    \qquad\text{for all } n\in\dN\text{ and }\ell\le k.
  \end{equation}

  Now we consider the process of constructing sequences as
  finished and proceed to prove properties of the whole set.
  By induction, \eqref{eq:16} leads to
  \begin{equation*}
    \norm{u^{k+1}_{1,n}}^2
    =\norm{u^0_{1,\psi^k_{-1}(n)}}^2-\sum_{j=0}^k\norm{v^j}^2+o(1),
    \qquad\text{as }n\to\infty,
  \end{equation*}
  and hence $\sum_{j=0}^\infty\norm{v^j}^2\le C_1$ by
  \eqref{eq:5}.  In view of \eqref{eq:7} this yields
  \begin{equation}
    \label{eq:17}
    q_k\to0,\qquad\text{as }k\to\infty.
  \end{equation}

  We claim that the
  diagonal sequence $(u^n_{1,n})$ satisfies
  \begin{equation}
    \label{eq:19}
    b_n\star u^n_{1,n}\weakto0,\qquad\text{in $H^1$, as
      $n\to\infty$, for every sequence $(b_n)\subseteq\dZ$.}
  \end{equation}
  Note that by construction, for all $\ell\le k$
  \begin{equation*}
    u^k_{1,n}
    =u^\ell_{1,\psi^{k-1}_{\ell-1}(n)}
    -\sum_{j=\ell}^{k-1}a^j_{\psi^{k-1}_j(n)}\star v^j_{\psi^{k-1}_j(n)}.
  \end{equation*}
  Hence we have the representation
  \begin{equation}
    \label{eq:22}
    u^n_{1,n}
    =u^k_{1,\psi^{n-1}_{k-1}(n)}
    -\sum_{j=k}^{n-1}a^j_{\psi^{n-1}_j(n)}\star v^j_{\psi^{n-1}_j(n)},
    \qquad\text{if } n\ge k.
  \end{equation}
  First we show that
  \begin{equation}\label{eq:23}
    \wlim_{n\to\infty}(-a^k_{\psi^{n-1}_k(n)})\star u^n_{1,n}=0,
    \qquad\text{in $H^1$, for all $k\in\dN_0$.}
  \end{equation}
  Fix $k\in\dN_0$.  For every $w\in H^1$ and $\varepsilon>0$
  there is $\ell_0\ge k+1$ such that
  \begin{equation*}
    \norm{w}^2\sum_{j=\ell_0}^\infty\norm{v^j}^2\le\varepsilon^2/2.
  \end{equation*}
  Then \eqref{eq:18}, \eqref{eq:22}, and the translation
  invariance of the norm yield for $n\ge\ell_0$
  \begin{multline*}
    \bigabs{\bigscp{(-a^k_{\psi^{n-1}_k(n)})\star u^n_{1,n},w}}\\
    \begin{aligned}
      &\le\bigabs{\bigscp{(-a^k_{\psi^{n-1}_k(n)})\star u^{k+1}_{1,\psi^{n-1}_k(n)},w}}
      +\biggabs{\biggscp{\sum_{j=k+1}^{\ell_0-1}(a^j_{\psi^{n-1}_j(n)}-a^k_{\psi^{n-1}_k(n)})\star
          v^j_{\psi^{n-1}_j(n)},w}}\\
      &\hspace{22em}+\norm{w}\,\biggnorm{\sum_{j=\ell_0}^{n-1}a^j_{\psi^{n-1}_j(n)}\star
        v^j_{\psi^{n-1}_j(n)}}\\
      &\le\bigabs{\bigscp{(-a^k_{\psi^{n-1}_k(n)})\star u^{k+1}_{1,\psi^{n-1}_k(n)},w}}
      +\biggabs{\biggscp{\sum_{j=k+1}^{\ell_0-1}(a^j_{\psi^{n-1}_j(n)}-a^k_{\psi^{n-1}_k(n)})\star
          v^j_{\psi^{n-1}_j(n)},w}} +\varepsilon.
    \end{aligned}
  \end{multline*}
  It is easy to see that the sequence $(\psi^{n-1}_k(n))_n$ is
  strictly increasing.  Hence the first term in the last
  expression tends to $0$ as $n\to\infty$ by \eqref{eq:20}, and
  the second term tends to $0$ by \eqref{eq:28} and
  \eqref{eq:21}.  Since $\varepsilon>0$ and $w\in H^1$ were
  arbitrary, this proves \eqref{eq:23}.

  To finish the proof of \eqref{eq:19}, suppose for a
  contradiction that $\wlim_{n\to\infty} b_n\star u^n_{1,n}=
  v\neq0$ in $H^1$, for a subsequence.  Equation~\eqref{eq:23}
  implies that
  \begin{equation*}
    \lim_{n\to\infty} \xabs{b_n+a^k_{\psi^{n-1}_k(n)}} =\infty,
  \end{equation*}
  for every $k\in\dN_0$.  Pick $k\in\dN_0$ such that
  $q_{k}<\norm{v}$.  This is possible by \eqref{eq:17}.  Then,
  for every $w\in H^1$, it follows from \eqref{eq:18} and
  \eqref{eq:22} that
  \begin{equation*}
    \bigabs{\bigscp{b_n\star u^{k}_{1,\psi^{n-1}_{k-1}(n)}-v,w}}
    \le\bigabs{\bigscp{b_n\star u^{n}_{1,n}-v,w}}
    +\biggabs{\biggscp{\sum_{j=k}^{n-1}(b_n+a^j_{\psi^{n-1}_j(n)})
        \star v^j_{\psi^{n-1}_j(n)},w}}
    \to0
  \end{equation*}
  as $n\to\infty$, similarly as above.  Hence
  \begin{equation*}
    \wlim_{n\to\infty}\xlr(){b_n\star
      u^k_{1,\psi^{n-1}_{k-1}(n)}}
    = v
  \end{equation*}
  with $\norm{v}>q_k$.  Since
  $\biglr(){u^k_{1,\psi^{n-1}_{k-1}(n)}}_n$ is a subsequence of
  $(u^k_{1,n})_n$, this contradicts the definition of $q_k$ and
  proves \eqref{eq:19}.

  We are now in the position to finish the proof of uniform
  continuity of $\cF$.  Equations \eqref{eq:24} and \eqref{eq:25}
  imply that
  \begin{gather}
    \lim_{n\to\infty}\abs{u^n_{1,n}-u^n_{2,n}}_p=0\label{eq:32}\\
    \shortintertext{and}
    \liminf_{n\to\infty}\abs{\cF(u^n_{1,n})-\cF(u^n_{2,n})}_\nu\ge C_2.\label{eq:33}
  \end{gather}
  By Lemma~\ref{lem:lions} and \eqref{eq:19} $u^n_{1,n}\to0$ in
  $L^p$.  Together with \eqref{eq:32} and \eqref{eq:33} this
  contradicts the continuity of $\cF$ on $L^p$ and therefore
  proves the assertion about uniform continuity.
  
  It only remains to prove BL-splitting for $\cF$ along weakly
  convergent sequences in $H^1$ with respect to their weak
  limits.  Suppose that $u_n\weakto v$ in $H^1$.  By
  Theorem~\ref{thm:almost-bl-splitting}\ref{item:2} there is a
  sequence $(v_n)\subseteq H^1$ such that $v_n\to v$ in $H^1$
  and, after passing to a subsequence of $(u_n)$,
  \begin{equation}\label{eq:27}
    \cF(u_n)-\cF(u_n-v_n)\to\cF(v),
    \qquad\text{in } L^\nu
  \end{equation}
  as $n\to\infty$.  Since $(u_n)$ and $(v_n)$ are bounded in
  $H^1$, and by the uniform continuity of $\cF$ on bounded
  subsets of $H^1$ with respect to the $L^p$-norm (and hence also
  with respect to the $H^1$-norm), it follows that we may replace
  $v_n$ by $v$ in \eqref{eq:27}.  Using this, a standard
  reasoning by contradiction yields the claim.
\end{proof}

\section{Construction of Examples}
\label{sec:constr-exampl}

\begin{proof}[Proof of Example~\ref{exa:oscillation}]
  We first treat the case $f(t)\coloneqq\cos(\pi t)\abs{t}^p$.
  Set $R_n\coloneqq n^{-p/N}$ and fix a sequence
  $(x_n)\subseteq\dR^N$ such that $\abs{x_n}\to\infty$ and
  $B_{R_m}(x_m)\cap B_{R_n}(x_n)=\varnothing$ for $m\neq n$.
  Define real functions $u$ and $u_n$ on $\dR^N$ by setting
  \begin{equation*}
    u\coloneqq\sum_{k=1}^\infty\chi_{B_{R_k}(x_k)},\qquad
    w_n\coloneqq 2n\chi_{B_{R_n}(x_n)},\qquad\text{and}\qquad
    u_n\coloneqq u+w_n,
  \end{equation*}
  for each $n\in\dN$.  It is straightforward to show that
  $u\in L^p$, that $(u_n)$ is a bounded sequence in $L^p$, and
  that $u_n\to u$ pointwise and in $L^p_\rmloc$.  On the other
  hand, denoting by $\omega_N$ the volume of the unit ball in
  $\dR^N$, we obtain
  \begin{multline}\label{eq:34}
    \xabs{\int_{\dR^N}f(u_n)-f(u_n-u)-f(u)}\\
    \begin{aligned}
      &=\xabs{\int_{B_{R_n}(x_n)}f(u+w_n)-f(w_n)-f(u)}\\
      &=\xabs{\int_{B_{R_n}(x_n)}
        \cos((2n+1)\pi)(2n+1)^p
        -\cos(2n\pi)(2n)^p
        -\cos\pi}\\
      &=\xabs{\int_{B_{R_n}(x_n)}(-(2n+1)^p-(2n)^p+1)}\\
      &=\omega_N\xlr(){\xlr(){2+\frac1n}^p+2^p-\fracwithdelims(){1}{n}^p}\\
      &\to 2^{p+1}\omega_N,
    \end{aligned}
  \end{multline}
  as $n\to\infty$.  Since $2^{p+1}\omega_N>0$, this implies the
  claim.
  
  For the other example, $f(t)\coloneqq\cos(\pi/t)\abs{t}^p$, we
  set $R_n\coloneqq n^{p/N}$ and fix a sequence
  $(x_n)\subseteq\dR^N$ such that $\abs{x_n}/R_n\to\infty$ and
  $B_{R_m}(x_m)\cap B_{R_n}(x_n)=\varnothing$ for $m\neq n$.  We define
  \begin{equation*}
    u\coloneqq\sum_{k=1}^\infty\frac{1}{2n(2n-1)}\chi_{B_{R_k}(x_k)},\qquad
    w_n\coloneqq \frac{1}{2n}\chi_{B_{R_n}(x_n)},
    \qquad\text{and}\qquad
    u_n\coloneqq u+w_n
  \end{equation*}
  for each $n\in\dN$.  Then again, $u\in L^p$, $(u_n)$ is a
  bounded sequence in $L^p$, and $u_n\to u$ pointwise and in
  $L^p_\rmloc$.  For $x\in B_{R_n}(x_n)$ we obtain
  \begin{equation}\label{eq:35}
    u(x)+w_n(x)=\frac{1}{2n(2n-1)}+\frac1{2n}=\frac1{2n-1}
  \end{equation}
  and hence
  \begin{multline}\label{eq:36}
    \xabs{\int_{\dR^N}f(u_n)-f(u_n-u)-f(u)}\\
    \begin{aligned}
      &=\xabs{\int_{B_{R_n}(x_n)}f(u+w_n)-f(w_n)-f(u)}\\
      &\ge\xabs{\int_{B_{R_n}(x_n)}
        \cos((2n-1)\pi)\fracwithdelims(){1}{2n-1}^p
        -\cos(2n\pi)\fracwithdelims(){1}{2n}^p
        }-\int_{B_{R_n}(x_n)}\fracwithdelims(){1}{2n(2n-1)}^p\\
      &=\xabs{\int_{B_{R_n}(x_n)}-\fracwithdelims(){1}{2n-1}^p
        -\fracwithdelims(){1}{2n}^p}
      -\int_{B_{R_n}(x_n)}\fracwithdelims(){1}{2n(2n-1)}^p\\
      &=\omega_N\xlr(){\fracwithdelims(){1}{2-\frac1n}^p
          +\fracwithdelims(){1}{2}^p-\fracwithdelims(){1}{2(2n-1)}^p}\\
      &\to \frac{\omega_N}{2^{p-1}},
    \end{aligned}
  \end{multline}
  as $n\to\infty$.  This yields the claim.
\end{proof}
\begin{proof}[Proof of Remark~\ref{rem:counterexamples}]
  The construction of these counterexamples is closely related to
  Example~\ref{exa:oscillation}.  First consider the function
  $f(t)\coloneqq\cos(\pi/t)t^2$.  We define the
  Lipschitz-continuous cut-off function $\eta\colon\dR\to\dR$ by
  \begin{equation*}
    \eta(t)\coloneqq
    \begin{cases}
      1,&\qquad t\le0,\\
      1-t,&\qquad 0<t<1,\\
      0,&\qquad 1\le t,
    \end{cases}
  \end{equation*}
  introduce $R_n\coloneqq n^{2/N}$, pick a sequence
  $(x_n)\subseteq\dR^N$ such that $\abs{x_n}/R_n\to\infty$ and
  $B_{R_m+1}(x_m)\cap B_{R_n+1}(x_n)=\varnothing$ for $m\neq n$,
  and define
  \begin{equation*}
    u(x)\coloneqq\sum_{k=1}^\infty\frac{1}{2n(2n-1)}\eta(\abs{x-x_k}-R_k),\qquad
    w_n(x)\coloneqq \frac{1}{2n}\eta(\abs{x-x_n}-R_n),
  \end{equation*}
  and $u_n\coloneqq u+w_n$ for each $n\in\dN$ and $x\in\dR^N$.
  It is straightforward to check that $u,w_n\in H^1$ and that
  $(w_n)$ is bounded in $H^1$.  Since $w_n\to0$ a.e.,
  $w_n\weakto0$ in $H^1$.  Using \eqref{eq:35} we estimate
  \begin{multline*}
    \int_{B_{R_n+1}(x_n)\ssm B_{R_n}(x_n)}\bigabs{f(u+w_n)-f(w_n)-f(u)}\\
    \le\int_{B_{R_n+1}(x_n)\ssm B_{R_n}(x_n)}
    \xlr(){\fracwithdelims(){1}{2n-1}^2+\fracwithdelims(){1}{2n}^2
      +\fracwithdelims(){1}{2n(2n-1)}^2}\\
    \le\frac{3\omega_N}{n^2}((R_n+1)^N-R_n^N)
    =3\omega_N((1+n^{-2/N})^N-1)
    \to0
  \end{multline*}
  as $n\to\infty$.  Hence by the calculation in \eqref{eq:36}
  \begin{multline*}
    \xabs{\int_{\dR^N}f(u_n)-f(u_n-u)-f(u)}\\
    \begin{aligned}
      &=\xabs{\int_{B_{R_n+1}(x_n)}f(u+w_n)-f(w_n)-f(u)}\\
      &\ge\xabs{\int_{B_{R_n}(x_n)}f(u+w_n)-f(w_n)-f(u)}\\
      &\hspace{7em}-\int_{B_{R_n+1}(x_n)\ssm B_{R_n}(x_n)}\bigabs{f(u+w_n)-f(w_n)-f(u)}\\
      &\to \frac{\omega_N}{2^{p-1}}
    \end{aligned}
  \end{multline*}
  and the claim follows.  Note that the example above has no
  simple analogue in the case $f(t)\coloneqq\cos(\pi/t)\abs{t}^p$
  for $p>2$, using $R_n\coloneqq n^{p/N}$ as in the proof of the
  second case of Example~\ref{exa:oscillation}.  The reason is
  that the analogously defined sequence $(w_n)$ is not bounded in
  $L^2$ in that case.

  Now we treat the function
  $f(t)\coloneqq\cos(\pi t)\abs{t}^{2^*}$.  To this end put
  $R_n\coloneqq n^{-2^*/N}$, fix a sequence $(x_n)\subseteq\dR^N$
  such that $\abs{x_n}\to\infty$ and
  $B_{2R_m}(x_m)\cap B_{2R_n}(x_n)=\varnothing$ for $m\neq n$, and
  choose $\gamma\in(0,1)$ small enough such that
  \begin{equation}
    \label{eq:37}
    \biglr(){3^{2^*}+2^{2^*}+1}\biglr(){(1+\gamma)^N-1}
    \le \frac{2^{2^*+1}}{2}.
  \end{equation}
  Define
  \begin{equation*}
    u(x)\coloneqq\sum_{k=1}^\infty\eta\fracwithdelims(){\abs{x-x_k}-R_k}{\gamma
      R_k},\qquad
    w_n(x)\coloneqq 2n \eta\fracwithdelims(){\abs{x-x_k}-R_n}{\gamma
      R_n},
  \end{equation*}
  and $u_n\coloneqq u+w_n$ for each $n\in\dN$ and $x\in\dR^N$.
  It follows that $\supp(w_n)=\olB_{(1+\gamma)R_n}(x_n)$ for each
  $n$, where $\olB_r(z)$ denotes the closed ball in $\dR^N$ with
  radius $r$ and center $z$.  Again, it is straightforward to
  check that $u,w_n\in H^1$, that $(w_n)$ is bounded in $H^1$,
  and that $w_n\weakto0$ in $H^1$.  Using \eqref{eq:37} we
  estimate
  \begin{multline*}
    \int_{B_{(1+\gamma)R_n}(x_n)\ssm B_{R_n}(x_n)}\bigabs{f(u+w_n)-f(w_n)-f(u)}\\
    \begin{aligned}
      &\le\int_{B_{(1+\gamma)R_n}(x_n)\ssm B_{R_n}(x_n)}
      \biglr(){(1+2n)^{2^*}+(2n)^{2^*}+1}\\
      &\le\omega_N\biglr(){(3n)^{2^*}+(2n)^{2^*}+n^{2^*}}\biglr(){((1+\gamma)R_n)^N-R_n^N}\\
      &=\omega_N\biglr(){3^{2^*}+2^{2^*}+1}\biglr(){(1+\gamma)^N-1}\\
      &\le \frac{2^{2^*+1}\omega_N}{2}
    \end{aligned}
  \end{multline*}
  for all $n$.  Hence by the calculation in \eqref{eq:34}
  \begin{multline*}
    \xabs{\int_{\dR^N}f(u_n)-f(u_n-u)-f(u)}\\
    \begin{aligned}
      &=\xabs{\int_{B_{(1+\gamma)R_n}(x_n)}f(u+w_n)-f(w_n)-f(u)}\\
      &\ge\xabs{\int_{B_{R_n}(x_n)}f(u+w_n)-f(w_n)-f(u)}\\
      &\hspace{7em}-\int_{B_{(1+\gamma)R_n}(x_n)\ssm B_{R_n}(x_n)}\bigabs{f(u+w_n)-f(w_n)-f(u)}\\
      &\ge\xabs{\int_{B_{R_n}(x_n)}f(u+w_n)-f(w_n)-f(u)}
      -\frac{2^{2^*+1}\omega_N}{2}\\
      &\to \frac{2^{2^*+1}\omega_N}{2}
    \end{aligned}
  \end{multline*}
  and the claim follows.  Note that this example has no simple
  analogue in the case $f(t)\coloneqq\cos(\pi t)\abs{t}^p$ for
  $p<2^*$, using $R_n\coloneqq n^{-p/N}$ as in the proof of the
  first case of Example~\ref{exa:oscillation}.  Here reason is
  that for the analogously defined sequence $(w_n)$,
  $(\nabla w_n)$ is not bounded in $L^2$.
\end{proof}
\begin{acknowledgement}
  I would like to thank Kyril Tintarev for an informative
  exchange on this subject.  Moreover, I thank the referee for
  suggesting to analyze the limiting cases in
  Theorem~\ref{thm:main}.
\end{acknowledgement}
\bibliographystyle{amsplain-abbrv} \bibliography{unicobl}
\end{document}